\DeclareSymbolFont{AMSb}{U}{msb}{m}{n}
\DeclareSymbolFontAlphabet{\mathbb}{AMSb}
\newcommand{\beqn}{\begin{eqnarray}}
\newcommand{\eeqn}{\end{eqnarray}}
\newcommand{\be}{\begin{equation}}
\newcommand{\ee}{\end{equation}}
\newcommand{\ba}{\begin{array}}
\newcommand{\ea}{\end{array}}
\newcommand{\cF}{{\cal F}}
\newcommand{\cK}{{\cal K}}
\newcommand{\cM}{{\cal M}}
\newcommand{\cS}{{\cal S}}
\newcommand{\cV}{{\cal V}}
\newcommand{\cW}{{\cal W}}
\newcommand{\cX}{{\cal X}}
\newcommand{\al}{\alpha}
\newcommand{\ci}{\cite}
\newcommand{\de}{\delta}
\newcommand{\De}{\Delta}
\newcommand{\ds}{\displaystyle}
\newcommand{\fr}{\frac}
\newcommand{\ga}{\gamma}
\newcommand{\la}{\label}
\newcommand{\Lam}{\Lambda}
\newcommand{\na}{\nabla}
\newcommand{\om}{\omega}
\newcommand{\vp}{\varphi}
\newcommand{\ov}{\overline}
\newcommand{\pa}{\partial}
\newcommand{\re}{\ref}
\newcommand{\Si}{\Sigma}
\newcommand{\si}{\sigma}
\newcommand{\ti}{\tilde}
\newcommand{\ve}{\varepsilon}
\newcommand\C{{\mathbb C}}
\newcommand\R{{\mathbb R}}
\newcommand\N{{\mathbb N}}
\newcommand\Z{{\mathbb Z}}
\newcommand\T{{\mathbb T}}
\newcommand{\Ga}{\Gamma}
\newcommand{\vka}{\varkappa}
\newcommand{\cm}{{\rm m}}
\newcommand\nab{{\bf \nabla}}
\newcommand{\5}{{\hspace{0.5mm}}}
\newcommand{\3}{{\hspace{0.2mm}}}
\newcommand{\rRe}{{\rm Re\5}}
\newcommand{\rIm}{{\rm Im\5}}
 \newcommand{\st}{\stackrel} 
 \newcommand{\toLtt}{\st{L^2(\ov\T)}{-\!\!\!-\!\!\!-\!\!\!\!\longrightarrow}}
\newcommand{\toLt}{\st{L^2(\T)}{-\!\!\!-\!\!\!-\!\!\!\!\longrightarrow}} 
\newcommand{\toLwt}{\st{L^2_w({\ov\T})}{-\!\!\!-\!\!\!-\!\!\!\!\longrightarrow}}
\newcommand{\toLC}{\st{C(\T)}{-\!\!-\!\!\!\!\longrightarrow}} 
\newcommand{\tocX}{\st{\cX}{-\!\!\!\!\longrightarrow}}
\newcommand{\Ker}{{\rm Ker\5}}
\newcommand{\Spec}{{\rm Spec\3}}
\renewcommand{\theequation}{\thesection.\arabic{equation}}
\newtheorem{theorem}{Theorem}[section]
\renewcommand{\thetheorem}{\arabic{section}.\arabic{theorem}}
\newtheorem{definition}[theorem]{Definition}
\newtheorem{lemma}[theorem]{Lemma}
\newtheorem{example}[theorem]{Example}
\newtheorem{remark}[theorem]{Remark}
\newtheorem{remarks}[theorem]{Remarks}
\newtheorem{cor}[theorem]{Corollary}
\newtheorem{proposition}[theorem]{Proposition}
\newcommand{\bd}{\begin{definition}}
 \newcommand{\ed}{\end{definition}}
\newcommand{\bt}{\begin{theorem}}
 \newcommand{\et}{\end{theorem}}
\newcommand{\bqt}{\begin{qtheorem}}
 \newcommand{\eqt}{\end{qtheorem}}
\newcommand{\bp}{\begin{proposition}}
 \newcommand{\ep}{\end{proposition}}
\newcommand{\bl}{\begin{lemma}}
 \newcommand{\el}{\end{lemma}}
\newcommand{\bc}{\begin{cor}}
 \newcommand{\ec}{\end{cor}}
\newcommand{\bex}{\begin{example}}
 \newcommand{\eex}{\end{example}}
\newcommand{\bexs}{\begin{examples}}
 \newcommand{\eexs}{\end{examples}}
\newcommand{\bexe}{\begin{exercice}}
 \newcommand{\eexe}{\end{exercice}}
\newcommand{\br}{\begin{remark} }
 \newcommand{\er}{\end{remark}}
\newcommand{\brs}{\begin{remarks}}
 \newcommand{\ers}{\end{remarks}}
\begin{document}

\begin{titlepage}
\vspace{2cm}

\begin{center}
{\Large\bf 
 On  orbital stability of ground states for finite crystals 
\medskip\\
in fermionic Schr\"odinger--Poisson model
}
\end{center}
\bigskip\bigskip

 \begin{center}
{\large A. Komech}
\footnote{
Supported partly 
by 
Austrian Science Fund (FWF): P28152-N35,
and the grant of  RFBR 16-01-00100.}
\\
{\it Faculty of Mathematics of Vienna University\\
and Institute for Information Transmission Problems RAS } \\
e-mail:~alexander.komech@univie.ac.at
\bigskip\\
{\large E. Kopylova}
\footnote{
Supported partly by  
Austrian Science Fund (FWF): P27492-N25,
and the grant of RFBR 16-01-00100.}
\\
{\it Faculty of Mathematics of Vienna University\\
and Institute for Information Transmission Problems RAS} \\
 e-mail:~elena.kopylova@univie.ac.at
\end{center}
\vspace{1cm}

 \begin{abstract}
We consider the 
Schr\"odinger--Poisson--Newton equations for finite  crystals
under periodic boundary conditions with one ion per cell of a lattice.
The electron field is described by the $N$-particle Schr\"odinger equation with antisymmetric 
wave function.

Our main results are 
i) the global dynamics with  moving ions, and
ii) 
the  orbital stability of 
 periodic  ground state
under a novel  
Jellium and Wiener-type conditions on the ion charge density.
Under Jellium condition
 both ionic and  electronic charge densities  
 for the ground state 
 are uniform.

\end{abstract}
\bigskip

{\bf Key words and phrases:}
crystal; lattice; Schr\"odinger--Poisson  equations; 
Pauli exclusion principle; antisymmetric wave function;
ground state;  orbital stability;
Hamilton structure; energy conservation; charge conservation; symmetry group;  
Hessian; Fourier transform.
\bigskip

{\bf AMS subject classification:} 35L10, 34L25, 47A40, 81U05

\end{titlepage}

\section{Introduction}
First mathematical results on the stability of matter were obtained 
by Dyson and Lenard 
in \ci{D1967, DL1968} 
where the energy bound from below
was established.
The thermodynamic limit
for the Coulomb systems  
was first studied by Lebowitz and Lieb 
\ci{LL1969,LL1973}, see the survey and further development in \ci{LS2010}.
These results were extended  by Catto, Le Bris,  Lions, 
and others to the Thomas--Fermi and Hartree--Fock models
\ci{CBL1998,CBL2001,CBL2002}. 
Further results in this direction are due to
Canc\'es,  Lahbabi, Lewin, Sabin, Stoltz, and others
 \ci{CLL2013,CS2012, BL2005, LS2014-1, LS2014-2}.
All these results concern  either
the convergence of
the ground state of finite particle systems
in the thermodynamic limit or 
the existence of the ground state
for infinite particle systems. 

However, no attention was paid to
the dynamical stability of crystals
with moving ions. 
This stability is 
necessary for a rigorous analysis 
of fundamental quantum phenomena in the solid state physics: 
heat conductivity, electric conductivity, thermoelectronic emission, photoelectric effect, 
Compton effect, 
etc., see \ci{BLR}.

In present paper we
 consider the coupled
Schr\"odinger--Poisson--Newton equations for finite  crystals
under periodic boundary conditions with one ion per cell of a lattice.
The electrons are described by the $N$-particle Schr\"odinger equation with antisymmetric 
wave function.
We
construct the global dynamics   of crystals
with  moving ions and prove the conservation of energy and charge.
 
Our main result
is  the  orbital stability of every 
 ground state with periodic arrangement of ions
under the novel `Jellium' and  Wiener-type  conditions on the ion charge density.

 The electron field is described by  the many-particle 
Schr\"odinger equation in the space of antisymmetric wave functions
which corresponds to the Pauli exclusion principle.
The ions are described as classical particles   
corresponding to the  
Born and Oppenheimer  approximation.
The ions interact with the electron field via 
the scalar potential, which  is a solution to the corresponding Poisson equation.
We find a novel 
stability criterion (\re{Wai}), (\re{W1}).
\medskip

We consider crystals which occupy the finite torus $\T:=\R^3/N\Z^3$
and have one ion per cell of the cubic lattice $\Ga:=\Z^3/N\Z^3$, where $N\in\N$.
The cubic
lattice  is chosen for the simplicity of notation.
We denote by  
$\sigma(x)$ the charge density of one 
ion,
\be\la{ro+}
\si\in C^2(\T),\qquad
\int_{\T} \sigma(x)dx=eZ>0, 
\ee 
where $e>0$ is the elementary charge.
Let us denote 
\be\la{Fv}
\ov\T:=
\T^{\ov N}
:=\{\ov x=(x_1,...,x_{\ov N}): x_j\in\T,\quad j=1,...,\ov N\},\qquad \ov N:=N^3.
\ee
\bd $\cF$ is the `fermionic' Hilbert space of  complex antisymmetric functions
 $\psi(x_1,...,x_{\ov N})$ on $\ov\T$
with the norm
\be\la{FN}
\Vert\psi\Vert_\cF^2:= \Vert\na^\otimes\psi\Vert_{L^2(\ov\T)}^2+\Vert\psi\Vert_{L^2(\ov\T)}^2,
\ee
where $\na^\otimes$ denotes the gradient with respect to $\ov x\in\ov\T$.
\ed
Let 
$\psi(\cdot,t)\in\cF$ for $t\in\R$  be the antisymmetric  
wave function of the fermionic electron field, 
$q(n,t)$ denotes the ion displacement  from the reference position $n\in\Ga$,
and
$\Phi(x,t)$ be the electrostatic  potential generated by the ions and electrons.
We assume $\hbar=c=\cm=1$, where $c$ is the speed of light and $\cm$ is the electron mass.
Let us denote the `second quantized' operators on $\cF$,
\be\la{2De}
\De^\otimes:=\sum_{j=1}^{\ov N} \De_{x_j};\qquad \Phi^\otimes(\ov x,t):=\sum_{j=1}^{\ov N} \Phi(x_j,t).
\ee
The coupled Schr\"odinger-Poisson-Newton equations  read as follows
\beqn\la{LPS1}
i\dot\psi(\ov x,t)\!\!&=&\!\!-\fr12\De^\otimes\psi(\ov x,t)-e\Phi^\otimes(\ov x,t)\psi(\ov x,t),\qquad \ov x\in\ov\T,
\\
\nonumber\\
-\De\Phi(x,t)\!\!&=&\!\!\rho(x,t):=\sum_{n\in\Ga}
\sigma(x-n-q(n,t))+\rho^e(x,t),\qquad x\in\T,
\la{LPS2}
\\
\nonumber\\
M\ddot q(n,t)
\!\!&=&\!\!-(\nab\Phi(x,t),\sigma(x-n-q(n,t))), 
\qquad n\in\Ga.
\la{LPS3}
\eeqn
Here
the 
brackets $(\cdot,\cdot)$
 stand for the  scalar product on the real Hilbert
space $L^2(\T)$ and for its different extensions,   
$M>0$ is the mass of one ion,
and the electronic charge density is defined by
\be\la{re}
\rho^e(x,t)
:=-e\int_{\ov\T}\sum_{j=1}^{\ov N} \de(x-x_j)|\psi(\ov x,t)|^2\,d\ov x,~~ x\in\T.
\ee
 Similar finite periodic approximations of crystals are treated in all textbooks on 
quantum theory of solid state
\ci{Born, Kit, Zim}. 
However,   the  stability 
of ground states in
this model was newer discussed.
 \medskip
 
 The total electronic charge (up to a factor) is defined  by
 \begin{equation}\la{Q}
 Q(\psi,q,p):= \int_{\ov\T}|\psi(\ov x)|^2d\ov x=\Vert\psi\Vert_{L^2(\ov\T)}^2.
 \end{equation}
The  Poisson equation (\re{LPS2}) implies that
\begin{equation}\la{r0}
\ds\int_{\T}\rho(x,t)dx=0.
\end{equation}
Hence, the potential $\Phi(x,t)$ can be eliminated
from  the system (\re{LPS1})--(\re{LPS3})
using 
the operator $G:=(-\De)^{-1}$,
see (\re{fs}) for a more precise definition. 
Then
the system  (\re{LPS1})--(\re{LPS3}) can be written in the Hamilton form
\be\la{HSi}
i\dot \psi(\ov x,t)=\fr12 \pa_{\ov\psi} E,\qquad\dot q(n,t)= \pa_{p(n)}E,\quad \dot p(n,t)=-\pa_{q(n)} E.
\ee
Here $\pa_{\ov\psi}:=\fr12 [\pa_{\psi_1}-i\pa_{\psi_2}]$, where $\psi_1:=\rRe\psi$ and $\psi_2:=\rIm\psi$, and 
the Hamilton functional (energy)
reads 
\be\la{Hfor}
  E(\psi, q, p)=\fr12\int_{\ov\T} |\na^\otimes\5\psi(\ov x)|^2\,d\ov x+
  \fr12 (\rho,G\rho)+
  \sum_{n\in\Ga} \fr{p^2(n)}{2M}.
\ee
Here  $ q:=(q(n): ~~n\in\Ga)\in\ov\T$,
 $ p:=(p(n):~~n\in\Ga)\in\R^{3\ov N}$, and
the total charge density
 $\rho(x)$ is the sum of
the ion and electronic charge densities,
\beqn\la{Hfor2}
\rho(x):=\rho^i(x)+\rho^e(x),\quad
\rho^i(x):=\sum\limits_{n\in\Ga}\si(x-n-q(n)),\qquad x\in\T,
\eeqn
in accordance with (\re{LPS2}) and (\re{re}).  
The identity (\re{r0}) implies 
the normalization
\be\la{rQ}
\Vert\psi(\cdot,t)\Vert_{L^2(\ov\T)}^2=Z,\qquad t\in\R.
\ee
We denote the Hilbert manifolds
\be\la{cM}
 \cV:=H^1(\ov\T)\otimes \ov\T\otimes \R^{3\ov N},\qquad
\cM:=\{X\in\cV: Q(X)=Z\}.
\ee
We prove the 
the global well-posedness of the dynamics:
for any $X(0)\in\cM$
there exists a unique
solution 
$X(t)\in C(\R,\cV)$ 
to (\re{HSi}), and
the energy and charge conservations  hold:
\be\la{EQ}
E(X(t))=E(X(0)),\quad Q(X(t))=Q(X(0)),\qquad t\in\R.
\ee
The charge conservation formally follows by
the Noether theory \ci{A, GSS87, KQ} 
due to the $U(1)$-invariance of the Hamilton functional:
\be\la{U1}
E(e^{i\al }\psi,q,p)=E(\psi,q,p),\qquad \al \in\R.
\ee

Our main goal is the stability of 
ground states, i.e.,
solutions to (\re{HSi})
with minimal energy  (\re{Hfor}).
We consider only ground states 
with  $\Ga$-periodic arrangement of ions (nonperiodic arrangements 
exist for some degenerate 
densities $\si$, 
see Remark \re{r1} iii) below).
\medskip

We impose two special Jellium and the Wiener conditions (\re{Wai}) and  (\re{W1}) 
onto the  ion densities $\si(x)$. 
The Wiener condition is a suitable 
version of the Fermi Golden Rule for crystals.
The Jellium condition implies that total density of ions 
is uniform when $q(n,t)\equiv 0$, see (\re{sipi}).
\medskip

The energy (\re{Hfor}) is nonnegative, and its minimum is zero.
We show  that under the Jellium condition 
 all
ground states  with $\Ga$-periodic arrangement of ions have the form 
\begin{equation}\la{gr}
S(t):=(\psi_0e^{-i\om_0t},\ov r,0), ~~~~~r\in\T.
\end{equation}
Here
\begin{equation}\la{gr2}
 \ov r\in \ov\T:\quad \ov r(n)=r,\,\,n\in\Ga,
\end{equation}
while $\psi_0$ is an eigenfunction 
\be\la{eig}
-\fr12 \De^\otimes\5\psi_0(\ov x)=\om_0\psi_0(\ov x),\qquad \ov x\in\ov\T,
\ee
corresponding to the minimal eigenvalue  
$
\om_0:=\min \5\5\Spec(-\fr12 \De^\otimes).
$
\medskip

We establish
the stability  
of the real  4-dimensional `solitary manifold'
\be\la{cS}
\cS=\{S_{\al,r}=(\psi_\al,\ov r,0):~\psi_\al(\ov x)
\equiv e^{i\al}\psi_0(\ov x),~~\al\in [0,2\pi];~~ r\in\T\},
\ee
where $\psi_0$ is a fixed eigenfunction, satisfying  the additional restriction (\re{adr}).
 The normalization (\re{rQ}) and the identity (\re{eig}) imply that
\be\la{ES}
E(S)= \om_0Z,\qquad S\in\cS.
\ee

Our main result is the following theorem.

\bt\la{tm}
Let the Jellium and Wiener conditions  (\re{Wai}) and (\re{W1}) hold as well as  (\re{adr}).
Then
for any 
$\ve>0$  there exists $\de=\de(\ve)>0$ such that for 
$X(0)\in\cM$ with
$d_\cV(X(0),\cS)<\de$ we have
\be\la{m}
d_\cV(X(t),\cS)<\ve,\qquad t\in\R,
\ee
where  $X\in C(\R,{\cal V})$
is the corresponding solution
 to (\ref{HSi}).

\et

This theorem means  the `orbital stability' in the sense of \ci{GSS87},
 since the manifold
$\cS=S^1\times\T$ is an orbit of the symmetry group
$U(1)\times \T$.
\medskip

Let us comment on our approach.
We prove the local well-posedness for the system  (\re{HSi})
by
the contraction mapping principle.
The global  well-posedness we deduce
from the  energy conservation which follows
by the Galerkin approximations. 
\medskip

The orbital stability of the solitary manifold $\cS$ is deduced
from  the lower energy estimate
\be\la{BLi}
E(X)-\om_0Z\ge \nu\,d^2(X,\cS)\qquad{\rm if}\qquad d(X,\cS)\le \de,\qquad X\in\cM,
\ee
where
$\nu,\de>0$ and `$d$' is the distance in the `energy norm'.
We deduce this estimate  from the positivity of the Hessian $E''(S)$
for $S\in \cS$ in the orthogonal directions to $\cS$ on the manifold $\cM$.
We show that the Wiener condition \eqref{W1} is necessary for this positivity under 
the Jellium condition \eqref{Wai}.
The last condition  cancels the negative
energy which is provided by the electrostatic instability
 (`Earnshaw's Theorem' \ci{Stratton}, 
see \ci[Remark 10.2]{KKpl2015}).
We expect that this  condition is also necessary for the positivity of $E''(S)$;
however, this is still an open challenging problem.
Anyway,
the positivity of $E''(S)$ can break
down when condition  \eqref{Wai}  fails. We have shown this in \ci[Lemma 10.1]{KKpl2015}
in the context of infinite crystals; the proof however extends directly to the finite
crystals.

\brs\la{r1}
{\rm
	i)
In the case of infinite crystal, corresponding to $N=\infty$, the orbital stability seems  impossible. 
 Namely, for  $N=\infty$ the estimates (\ref{s1}), (\ref{s2}), (\ref{GP2}) and (\ref{fp}) break down,
 as well as the estimate of type (\ref{BLi}) which is due to the discrete spectrum of the energy
 Hessian $E''(S)$ on the compact torus.
\medskip\\
	ii)  The identity (\re{roZ}) generically breaks down for the eigenfunctions 
(\re{gex})
if the  condition  (\re{adr}) fails. Respectively, the orbital stability 
of these `mixed states' is an open problem.

}
\ers

Let us comment on previous works in this field.
\medskip\\
 The ground state for crystals 
in the Schr\"odinger--Poisson model was constructed in 
\ci{K2014,K2015}; its linear stability was proved in \ci{KKpl2015}.  
\medskip\\
In the Hartree--Fock model
the crystal ground state 
 was constructed for the first time by Catto, Le Bris, and  Lions  \ci{CBL2001,CBL2002}.
For the Thomas--Fermi model, see \ci{CBL1998}.
\medskip\\
In \ci{CS2012}, Canc\'es and Stoltz have established the well-posedness  for 
the dynamics of  
local perturbations of the  ground state density matrix
in the  {\it random phase approximation}
for the reduced  Hartree--Fock equations
with the Coulomb  pairwise interaction potential $w(x-y)=1/|x-y|$.
However, the  space-periodic nuclear potential
in the equation \ci[(3)]{CS2012}
does not depend on time that corresponds to 
the fixed nuclei positions. 
 \medskip\\
The nonlinear Hartree--Fock dynamics
with the Coulomb potential and
without the  random phase approximation
was not studied previously,
see the discussion in 
\ci{BL2005} and in the Introductions of \ci{CLL2013,CS2012}.
\medskip\\
In \ci{CLL2013} 
E. Canc\`es, S. Lahbabi, and M. Lewin have considered the random 
reduced HF model of crystal  when 
the ions charge density and the electron density matrix are random processes,
and the action of the lattice translations on the probability space is ergodic.
The authors obtained suitable generalizations of the Hoffmann--Ostenhof 
and Lieb--Thirring inequalities  for ergodic density matrices, 
and
construct a random potential which is a solution  to 
 the Poisson equation 
with the corresponding stationary stochastic  charge density. 
The main result is the  coincidence of this model with the thermodynamic limit in  
the case of the short-range Yukawa interaction.
\medskip\\
In \ci{LS2014-1}, Lewin and Sabin have established the well-posedness for the 
reduced von Neumann equation, describing the Fermi gas,
with density matrices of infinite trace 
and pair-wise interaction potentials $w\in L^1(\R^3)$. Moreover, they  
proved the asymptotic stability of translation-invariant stationary states 
for 2D Fermi gas \ci{LS2014-2}.
\bigskip

The paper is organized as follows.
In Section 2 we introduce function spaces.
In Section 3 we collect all our assumptions.
In Section 4
we describe all fermionic jellium ground states and give basic examples.
In Section 5  we prove 
the stability of the solitary manifold $\cS$ establishing  
the positivity  the energy Hessian. 
In Appendices we construct the global dynamics.
\medskip\\
{\bf Acknowledgments.} The authors are grateful to Herbert Spohn for helpful 
discussions and remarks.


\setcounter{equation}{0}
\section{Function spaces and integral equation}
The operator $G:=(-\De)^{-1}$   is well defined in the Fourier series:
\be\la{fs}
\rho(x)=\sum_{\xi\in\Xi}\ti\rho(\xi)e^{i\xi x},\quad 
G\rho:=\sum_{\xi\in\Xi\setminus 0}\fr{\ti\rho(\xi)}{\xi^2}e^{i\xi x},\qquad x\in\T.
\ee
Here $\ti\rho(0,t)=0$ by (\re{r0}). 
 Hence,
$\Phi(\cdot,t)=G\rho(\cdot,t)$ up to an additive constant $C(t)$,
 which can be offset by a gauge transform 
$\psi(\ov x,t)\mapsto\psi(\ov x,t)\exp(ie\ds\int_0^t C(s)ds)$. 
Substituting $\Phi(\cdot,t)=G\rho(\cdot,t)$
into the remaining equations (\ref{LPS1}) and (\ref{LPS3}) we can write these equations as
\be\la{vf}
\dot X(t)=F(X(t)),\qquad t\in\R,
\ee
where $X(t)=(\psi(\cdot,t), q(\cdot,t), p(\cdot,t))$ with $p(\cdot,t):=\dot q(\cdot,t)$.
Equation (\ref{vf}) with the normalization (\ref{rQ}) 
is equivalent, up to a gauge transform,
to the
system (\ref{LPS1})--(\ref{LPS3}).
Finally, the equation (\re{vf}) 
can be written in the Hamilton form (\ref{HSi}), which is equivalent to 
\begin{equation}\la{HS}
\dot X(t)=JE'(X(t)),
\end{equation}
where 
\begin{equation}\la{HS2}
J=\left(
\begin{array}{rrr}
-i/2 &0&0\\
0   &0&1\\
0   &-1&0
\end{array}
\right).
\end{equation}

We will use the following function spaces with $s=0,\pm 1$.
Let us define the Sobolev space $H^s(\ov\T)$ as the real Hilbert space of 
complex-valued functions
with the  scalar product 
\be\la{sp}
(\psi,\vp)_s:=\rRe\int_{\ov\T}\sum_{|\al|\le s} \pa^\al\psi(\ov x)
\pa^\al\ov\vp(\ov x)d\ov x,\qquad s=0, 1.
\ee
By definition, $H^{-1}(\ov\T)$ is the dual space to $H^1(\ov\T)$,
which will be identified with distributions by means of the scalar product in $H^0(\ov\T)$.

\bd
i)  Let us denote the  real Hilbert space 
$\cW^s:=H^s(\ov\T)\oplus \R^{3\ov N}\oplus \R^{3\ov N}$ for $s=0,\pm1$.
\medskip\\
ii) $\cV^s:=
H^s(\ov\T)\otimes \ov\T\otimes \R^{3\ov N}$ is 
the real Hilbert manifold endowed with the metric 
\begin{equation}\la{dVs}
d_{\cV^s}(X,X'):=\Vert\psi-\psi'\Vert_{H^s({\T})}+|q-q'|+|p-p'|,\qquad X=(\psi,q,p),\quad
X'=(\psi',q',p')
\end{equation}
and with the `quasinorm'
\be\la{cVs}
|X|_{\cV^s}:=\Vert\psi\Vert_{H^s(\ov\T)}+|p|,\qquad X=(\psi,q,p).
\ee

\ed
The linear space $\cW^s$ is  the tangent space to 
the Hilbert manifold $\cV^s$ in each point $X\in\cV^s$. 
We will write $\cX:=\cV^0$, $\cV:=\cV^1$, $\cW:=\cW^1$,
and $(\cdot,\cdot)_0=(\cdot,\cdot)$,
which agrees with the definition of  
the scalar product on the real Hilbert space $L^2(\T)$. In particular,
\begin{equation}\la{1i}
(1,i)=0.
\end{equation}
Denote by the brackets $\langle\cdot,\cdot\rangle$
the scalar product on $\cX$ and also the duality between 
$\cW^{-1}$ and $\cW^1$:
\be\la{dWW}
\langle Y,Y'\rangle:=(\vp,\vp')+\vka\vka'+\pi\pi',\qquad Y=(\vp,\vka,\pi),\quad
Y'=(\vp',\vka',\pi').
\ee
Obviously,
\begin{equation}\la{EQV}
|X|_\cV^2\le C[E(X)+Q(X)],\qquad X\in\cV.
\end{equation}

We construct global dynamics for the system  (\re{HS}).
This system 
is a  nonlinear infinite-dimensional perturbation of the free Schr\"odinger equation.
We rewrite it in the integral  form
\begin{equation}\la{LPSi}
\left\{\begin{array}{lll}
\psi(t)&=&e^{\fr i2t\De^\otimes } \psi(0)+ie\ds\int_0^t e^{\fr i2(t-s)\De^\otimes} [ \Phi^\otimes(s)\psi(s) ]ds,\\
\\
q(n,t)&=&q(n,0)+\frac 1M\ds\int_0^t p(n,s)ds\mod N\Z^3,\\
\\
p(n,t)&=&p(n,0)-\ds\int_0^t (\nabla \Phi(s),\sigma(\cdot-n-q(n,s))) ds,
\end{array}\right|
\end{equation}
where $\Phi(s):=G\rho(s)$.
In the vector form (\ref{LPSi}) reads
\begin{equation}\la{LPSiv}
X(t)=e^{-tA}X(0)+\int_0^t  e^{-(t-s)A} N(X(s)) ds\mod \left(\ba{c}0\\ N\Z^3\\0\ea\right).
\end{equation}
Here
\beqn\la{HN}
A&=&\left(\begin{array}{ccc}
	 -\fr i2\De^\otimes  & 0 & 0\\
	0&0&0\\
	0&0&0
\end{array}\right),
\nonumber\\
\nonumber\\
N(X)&=&(ie \Phi^\otimes\5\psi ~, p,~f),\,\,\, f(n):=-(\nabla \Phi,\sigma(\cdot-n-q(n))),\,\,\,\Phi:=G\rho,
\eeqn
where $\rho$ is defined by (\re{Hfor2}).

\setcounter{equation}{0}
\section{Main assumptions}

Our main result concerns   the orbital stability of the
ground states (\re{gr}).
We will see that the ground states  can be stable  
depending on the choice of the ion density $\sigma$.
We study  densities 
$\sigma$ satisfying the following two conditions. 
First, we will assume 
\begin{equation}\la{Wai}
\mbox{\bf The Jellium Condition:}~~~~~ \ti\si(\xi)
  :=\int_\T e^{i\xi x}\si(x)dx
 =0,\quad \xi\in \ga^*\setminus 0,
~~~~~~~~~~~~ ~~~~~~~~~~~~~~~~~~ ~
\end{equation}
where $\ga^*:=2\pi\Z^3$. 
This condition implies that total density of ions 
is uniform when $q(n,t)\equiv 0$,
\begin{equation}\la{sipi}
 \sum_{n\in\Ga}\si(x-n)\equiv eZ,\qquad x\in\T.
 \end{equation}
 The simplest example of such a 
$\sigma$ is a constant over the unit cell of a given lattice, which is what physicists 
usually call Jellium \cite{GV2005}. Moreover,
this condition  holds 
for a broad class of  functions  $\si$,
see   Section \re{sex}. 
Here we study this model in the rigorous context of the Schr\"odinger--Poisson equations. 

Furthermore, we will assume the 
Wiener type
 spectral property 
\begin{equation}\la{W1}
\mbox{\bf The Wiener Condition:}~~~\Si(\theta):=\sum_{m\in\Z^3}\Big[
 \fr{\xi\otimes\xi}{|\xi|^2}|\ti\si(\xi)|^2\Big]_{\xi=\theta+2\pi m}>0,\
\quad \theta\in \Pi^*\setminus \ga^*,
\end{equation}
where the Brillouin zone $\Pi^*$ is defined by
\begin{equation}\la{PPG}
 \Pi^*:= \{\xi=(\xi^1,\xi^2,\xi^3)\in\Xi:0\le \xi^j\le 2\pi,~~j=1,2,3\},\quad\Xi:=\fr{2\pi}N\Z^3.
\end{equation}
This condition is
 an analogue of the Fermi Golden Rule
for crystals. 
It is independent of (\re{Wai}).
 We have introduced  
conditions of type (\re{Wai}) and  (\re{W1}) in \ci{KKpl2015} 
 in the framework of infinite crystals.
\br\la{rW}
{\rm
i) The series \eqref{W1} converges 
for $\theta\in\Xi\setminus \ga^*$
by the Parseval identity since $\si\in L^2(\T)$
by \eqref{ro+}.
\medskip\\
ii) The matrix $\Si(\theta)$ is $\ga^*$-periodic outside  $\ga^*$.
Thus, (\re{W1}) means that
$\Si(\theta)$ is a positive matrix
for $\theta\in \ov\Pi^*\setminus 0$, where $\ov\Pi^*$
is
 the `discrete torus' $\Xi/\ga^*$.
 }
\er

The series \eqref{W1} is a nonnegative matrix.
Hence,
 the Wiener condition holds `generically'.
For example 
it holds if
\begin{equation}\la{W1s}
\ti\si(\xi)\ne 0,\qquad \xi\in \Xi\setminus\ga^*,
\end{equation}
i.e., (\re{Wai}) are the only zeros of $\ti\si(\xi)$.
However,  (\re{W1}) does not hold 
for the simplest Jellium model,
 when $\sigma$ is constant on the unit cell, see (\re{sic}) and (\re{sJM}). 
\medskip

Finally, we need an additional condition for the orbital stability 
of the ground state (\re{gr}).
Namely, every eigenfunction  (\re{eig}) admits an expansion
in exterior products (see (\re{Lam})),
\be\la{gex}
\psi_0(\ov x)=\sum_{\ov k} C(\ov k)\Lam_{j=1}^{\ov N} e^{ik_j x_j}.
\qquad k_j\in\Xi:=\fr{2\pi}N\Z^3.
\ee
Here  $\ov k:=\{k_1,...,k_{\ov N}\}$, where $k_j$ are different for 
distinct $j$, and $\fr12 \sum_{j=1}^{\ov N}k_j^2=\om_0$.
We will consider the eigenfunctions 
(\re{gex}) with the additional restriction
\be\la{adr}
\#(\ov k\setminus\ov k')\ge 2\quad{\rm if}
\quad \ov k\ne \ov k'. 
\ee
This condition implies that
the corresponding electronic charge density 
is uniform
(see Lemma \re{lre}),
\begin{equation}\label{roZ}
\rho^e(x)\equiv -eZ,\qquad x\in \T.
\end{equation}
This identity plays a crucial role in our approach. It implies that
the corresponding total charge density (\re{Hfor2}) identically vanishes by (\re{sipi}).
Let us emphasize
that both ionic and electronic charge densities are uniform for the ground state under 
the Jellium
condition together with (\re{adr}).
\medskip

\setcounter{equation}{0}
\section{Fermionic jellium ground states}
Here we check the key identity (\re{roZ})
and
construct all solutions to (\re{HS}) with minimal 
energy  (\re{Hfor}). Furthermore we
give examples illustrating the Jellium and 
the Wiener conditions.

\subsection{Uniform electronic charge density}
Let us  establish the   identity (\re{roZ}).
\bl\la{lre} 
Let  the condition (\re{adr}) hold for an
 eigenfunction (\re{gex}), and 
\be\la{norm}
\int_{\ov\T}|\psi_0(\ov x)|^2d\ov x=Z.
\ee
Then the identity  (\re{roZ}) holds. 
\el
\begin{proof}
By the  antisymmetry of  $\psi_0(x_1,...,x_{\ov N})$
it remains to prove that 
\be\la{norm2}
\int_{\ov\T}\de(x-x_1)|\psi_0(\ov x)|^2d\ov x=Z/\ov N,\qquad x\in\T.
\ee
Let us use the expansion (\re{gex}).
The normalization condition (\re{norm}) gives
\be\la{ppo12}
\sum_{\ov k}|C(\ov k)|^2 \ov N^{\ov N}=Z.
\ee
Further,
\beqn
&&
\int_{\ov\T}\de(x-x_1)|\psi_0(\ov x)|^2d\ov x
= \fr 1{\ov N!}\sum_{\ov k}\Big\{|C(\ov k)|^2
\int_{\ov\T}\de(x\!-\!x_1)
\Big[\sum_{\pi,\pi'\in S_{\ov N}} 
(-1)^{|\pi|+|\pi'|} \prod\limits_{j=1}^{\ov N}
e^{i[k_{\pi(j)}-k_{\pi'(j)}]x_j}
\Big]
d\ov x\Big\} \nonumber 
\\
&+&\fr 1{\ov N!}\rRe\sum_{\ov k\ne \ov k'}\Big\{C(\ov k)\ov C(\ov k')
\Big[\sum_{\pi,\pi'\in S_{\ov N}} 
(-1)^{|\pi|+|\pi'|} \int_{\ov\T}\de(x\!-\!x_1)\prod\limits_{j=1}^{\ov N}
e^{i[k_{\pi(j)}-k'_{\pi'(j)}]x_j}
d\ov x\Big]\Big\}. \la{a}
\eeqn
The integrals in the last line vanish 
since $k_{\pi(j)}-k'_{\pi'(j)}\ne 0$
at least for one $j\ne 1$ by (\re{adr}). 
On the other hand,
the integrals in the first line do not vanish
only in the case when $k_{\pi(j)}\equiv k_{\pi'(j)}$ for $j\ne 1$, i.e.,
when $\pi=\pi'$. Hence, 
\be
\int_{\ov\T}\de(x-x_1)|\psi_0(\ov x)|^2d\ov x
= 
 \ov N^{\ov N-1}
\sum_{\ov k}|C(\ov k)|^2
\int_{\T}\de(x\!-\!x_1)
d x_1=\ov N^{\ov N-1}
\sum_{\ov k}|C(\ov k)|^2=Z/\ov N
\ee
by (\re{ppo12}).

\end{proof}
\br\la{rPA}
Similar calculations 
show that the uniformity (\re{norm2}) can break down
for the wave functions (\re{gex}) if the condition (\re{adr}) fails.
\er

\subsection{Description of  ground states}

The following lemma describe all ground states
 with $\Ga$-periodic arrangement of ions.  

\bl\la{Jgs}
All solutions 
to (\re{HS})
of minimal  energy 
with $\Ga$-periodic arrangement of ions  
are given by (\re{gr}), where $\psi_0$ is an eigenfunction (\re{eig}) 
with the normalization (\re{rQ}).

\el
\begin{proof} 
It suffices to construct all solutions $(\psi(t),q(t),p(t))$ which minimize
the first integral  
on the right hand side
of (\re{Hfor}) under the normalization
condition (\re{rQ}),
  with zero  second and the 
third terms and with $\Ga$-periodic arrangement of ions.

First, the  solutions (\re{gr})
have all these properties when $\psi_0$ is the eigenfunction
 (\re{gex}) satisfying the condition  (\re{adr}).
Namely,  the
first integral  
on the right hand side
of (\re{Hfor}) takes the minimal value 
for the eigenfunctions
under the normalization
condition (\re{rQ}). The second and the 
third terms on the right hand side vanish since the corresponding total charge density
$\rho(x)\equiv 0$ by (\re{sipi}) and (\re{roZ}).

Similarly, for general solution $(\psi(t),q(t),p(t))$ 
the first integral, under 
the normalization condition (\re{rQ}),
takes the  
minimal value for the eigenfunctions (\re{eig}).
Then
\be\la{eig2}
-\fr12 \De^\otimes\5\psi(\ov x,t)=
\om_0\psi(\ov x,t),\qquad \ov x\in\ov\T,\quad t\in\R.
\ee
The second summand of (\re{Hfor}) vanishes only for $\rho(x)\equiv 0$.
Then, up to a gauge transformation,
$\Phi(\cdot,t)=G\rho(\cdot,t)= 0$.
Now the equation
  (\re{LPS1}) 
implies that $\psi(\ov x,t)=e^{i\om_0t}\psi_0(\ov x)$ by (\re{eig2}).
Finally, the third summand of (\re{Hfor}) vanishes only for 
 $\dot q(n,t)=p(n,t)\equiv 0$.
Hence, by the $\Ga$-periodicity,
\begin{equation}\la{qnc}
q(n,t)\equiv r,\qquad n\in\Ga,\quad t\in\R,
\end{equation}
where $r\in \T$.
 \end{proof}

 \subsection{The Jellium and Wiener conditions. Examples}\la{sex}

The Wiener condition (\re{W1}) for the ground states (\re{gr}) holds 
under the generic assumption
(\re{W1s}).
On the other hand,  (\re{W1}) does not hold
  for the simplest Jellium model,
 when $\si(x)$ is the  function
\begin{equation}\la{sic}
\si_1(x):=
eZ\chi_1(x)\chi_1(x)\chi_1(x),\qquad x\in\T,
\end{equation}
where $\chi_1$ is the characteristic function of the interval $[0,1]$ mod $N$.
In this case
the Fourier transform 
\begin{equation}\la{sJM}
\ti\sigma_1(\xi)=eZ\ti\chi_1(\xi_1)\ti\chi_1(\xi_2)\ti\chi_1(\xi_3),
\qquad \xi\in\Xi,
\end{equation}
where
\begin{equation}\la{sJM1}
\qquad
\ti\chi_1(s)=\fr {2\sin s/2}s,\quad s\in \fr{2\pi}N\Z\setminus 0.
\end{equation}
Now
for $\theta= (0,\theta_2,\theta_3)$ we have
\begin{equation}\la{DK2}
\Si(\theta)= \sum_{m\in\Z^3:\,m_1=0}\Big[
 \fr{\xi\otimes\xi}{|\xi|^2}|\ti\si(\xi)|^2\Big]_{\xi=\theta+2\pi m},
\qquad \theta\in \Pi^*\setminus\ga^*,
\end{equation}
which is a degenerate matrix since $\xi_1=0$ in each summand. Hence, (\re{W1}) fails.
Similarly, the Wiener condition  fails for 
$\si_k(x)=eZ\chi_k(x_1)\chi_k(x_2)\chi_k(x_3)$,
 where $\chi_k=\chi_1*...*\chi_1$ ($k$ times)
 with $k=2,3,...$,
since in this case
\begin{equation}\la{sJM2}
\ti\sigma_k(\xi)=eZ\ti\chi_k(\xi_1)\ti\chi_k(\xi_2)\ti\chi_k(\xi_3);\qquad
\ti\chi_k(s)=\Big[\fr {2\sin s/2}s\Big]^k,\quad s\in
\fr{2\pi}N\Z\setminus 0.
\end{equation}


\setcounter{equation}{0}
\section{The orbital stability of the ground state}

In this section we expand the energy into the Taylor series and prove the orbital stability checking the
positivity of the energy Hessian.



\subsection{The Taylor expansion of energy functional}

We will deduce the lower estimate (\re{BLi}) using
 the  Taylor expansion of $E(S+Y)$ for 
$S=S_{\al,r}=(\psi_\al,\ov r,0)\in \cS$ and $Y=(\vp,\vka,p)\in\cW=H^1(\ov\T)\oplus \R^{3\ov N}\oplus \R^{3\ov N}$:
 \be\la{te} 
E(S+Y)=E(S)+\langle E'(S),Y\rangle +\fr12 \langle Y,E''(S)Y\rangle + R(S,Y)=\om_0Z+
\fr12 \langle Y,E''(S)Y\rangle + R(S,Y)
\ee
since $E(S)=\om_0Z$ by (\re{ES}), and $E'(S)=0$.
Here  $E'(S)$ and  $E''(S)$ stand for the G\^ ateaux differentials.
Let us recall that $\psi_\al=e^{i\al}\psi_0(x)$ where $\psi_0(x)$
is given by (\re{gex}) and the condition (\re{adr}) holds.

First,
we expand 
 the  charge density (\re{Hfor2})
corresponding to $S+Y=(\psi_\al+\vp,\ov r+\vka,p)$:
\be\la{ro}
\rho(x)=\rho^{(0)}(x)+\rho^{(1)}(x)+\rho^{(2)}(x),\qquad x\in\T,
\ee
where $\rho^{(0)}$ and  $\rho^{(1)}$ are respectively 
the  terms of zero and first  order in 
$Y$, while  $\rho^{(2)}$ is the remainder.
However, $\rho^{(0)}(x)$ is the total charge density of the ground state
which is identically zero
by
(\ref{sipi}) and (\re{roZ}):
\begin{equation}\la{ro0}
\rho^{(0)}(x)=\rho^i_0(x)-e|\psi_\al(x)|^2\equiv 0,\qquad x\in{\T}.
\end{equation}
Thus, $\rho=\rho^{(1)}+\rho^{(2)}$.
Expanding (\re{Hfor2}) further, we obtain
\beqn
\la{ro11}
\!\!\!\!\!\!\!\!\!\!\!\!\!\!\!\!\!\!\!\!\!\!\!\!\!\!\rho^{(1)}(x)\!\!\!\!&\!\!\!\!=\!\!\!\!&\!\!
\si^{(1)}(x)
-2e\,\sum\limits_{j=1}^{\ov N}\rRe(\psi_\al, \vp)_j(x)
,\quad \si^{(1)}(x)=-\sum_{n\in\Ga} \vka(n)\cdot\na\si(x-n-r),\quad
\\
\la{ro13}
\!\!\!\!\!\!\!\!\!\!\!\!\!\!\!\!\!\!\!\!\!\!\!\!\!\!\rho^{(2)}(x)\!\!\!\!&\!\!=\!\!\!\!&\!\!\si^{(2)}(x)
\!-\!e
\sum\limits_{j=1}^{\ov N}(\vp,\vp)_j(x),~ \si^{(2)}(x)\!=\!\fr12\sum_{n\in\Ga}
\int_0^1\!\!(1\!-\!s)
[\vka(n)\cdot\na]^2\si(x\!-\!n\!-\!r-\!s\vka(n))ds,
\eeqn
where we denote
\be\la{wwd}
(\psi_\al, \vp)_j(x):=\int_{\ov\T}\de(x-x_j)\psi_\al(\ov x)\ov\vp(\ov x)\,d\ov x,
\qquad x\in\T.
\ee
Substituting  $\psi=\psi_\al +\vp$ and $\rho=\rho^{(1)}+\rho^{(2)}$ 
 into (\ref{Hfor}), we obtain that
the  quadratic part  of (\re{te}) reads 
\begin{equation}\la{B2}
\fr12\langle Y,E''(S) Y \rangle=\fr12\int_{\ov\T}|\na \vp(\ov x)|^2]d\ov x+
\fr12 (\rho^{(1)},G\rho^{(1)})+K(p),~~~~~~ K(p):=\ds\sum_n\fr{p^2(n)}{2M}~~
\end{equation}
and the remainder equals
\begin{equation}\la{B3}
R(S,Y)=\fr 12 (2\rho^{(1)}+\rho^{(2)},G\rho^{(2)}). 
\end{equation}

\subsection{The null space of Hessian}
In this section
 we calculate the null space 
\be\la{KYd}
\cK(S):=\Ker\, \Big[E''(S)\Big|_\cW\Big],\qquad S\in \cS.
\ee

\bl\la{lW}
Let the Jellium condition (\re{Wai}) and the Wiener condition   
(\re{W1}) hold, and $S\in\cS$. Then
\be\la{KY}
\cK(S)=\{(0,\ov s,0):~~s\in\R^3 \},
\ee
where $\ov s\in\R^{3\ov N}$ is defined similarly to (\re{gr2}): 
$\ov s(n)\equiv s$.
\el
\begin{proof}
	All the summands of
	the
	energy (\re{B2}) are nonnegative. Hence,  this expression is zero if and only if
	all the summands vanish: in the notation (\re{ro11})
	\be\la{rb}
	\vp(\ov x)\equiv C,\quad
	(\rho^{(1)}, G\rho^{(1)})=\Vert\sqrt{G}[\si^{(1)}-2e
	\sum\limits_{j=1}^{\ov N}\rRe(\psi, \vp)_j(x)
	]\Vert_{L^2(\T)}^2
	=0,\quad p=0.
	\ee
	Here $C=0$ by the antisymmetry of $\vp$. Therefore, 
	 $(\psi, \vp)_k(x)\equiv 0$, and hence,
 (\re{rb}) implies that
	\be\la{rb2}
	\sqrt{G}\si^{(1)}=0.
	\ee
	On the other hand, 
	in the Fourier transform 
	(\re{ro11}) reads
	\be\la{B314}
	\ti\si^{(1)}(\xi)=\ti\si(\xi)\xi\cdot\sum_{n\in\Ga} ie^{i\xi [n+r]}\vka(n)
	=i\ti\si(\xi)\xi\cdot  e^{i\xi r} \hat \vka(\xi),\qquad\xi\in \Xi,
	\ee
	where $\hat \vka(\xi):=\sum_{n\in\Ga} e^{i\xi n}\vka(n)$ is a $2\pi\Z^3$-periodic function on
	$\Xi$.
	Hence, Definition (\re{fs}) and the Jellium condition (\re{Wai}) imply that
	\beqn\la{B315}
	0=\Vert\sqrt{G}\si^{(1)}\Vert_{L^2(\T)}^2&=&
	N^{-3}\sum_{\Xi\setminus \ga^*} |\ti\si(\xi)\fr{\xi\hat \vka(\xi)}{|\xi|}|^2
	\nonumber\\
	\nonumber\\
	&=&N^{-3}\sum_{\theta\in\Pi^*\setminus \ga^*} 
	\langle\hat \vka(\theta),
	\sum_{m\in\Z^3}\Big[\fr{\xi\otimes\xi}{|\xi|^2}|\ti\si(\xi)|^2\Big]_{\xi=\theta+2\pi m}\hat \vka(\theta)\rangle
	\nonumber\\
	\nonumber\\
	&=&N^{-3}\sum_{\theta\in\Pi^*\setminus \ga^*} 
	\langle\hat \vka(\theta),
	\Si(\theta)
	\hat \vka(\theta)\rangle.
	\eeqn
	As a result, 
	\be\la{B316}
	\hat \vka(\theta)=0,\qquad \theta\in \Pi^*\setminus \ga^*
	\ee
	by the Wiener condition
	(\re{W1}).
	On the other hand, $\hat \vka(0)\in\R^3$ remains arbitrary, see Remark \re{rW} ii).
	Respectively, 
	$\vka=\ov s$ with 
	an arbitrary $s\in\R^3$.\end{proof}
\br
{\rm
The key point of the proof is the explicit calculation (\re{B314}) 
in the Fourier transform.
This calculation relies on the invariance of
the Hessian $E''(S)$ with respect to $\Ga$-translations which is 
due to the periodicity of the ions arrangement of the ground state.
}
\er

\br{\it  Beyond the Wiener condition}
{\rm
If the Wiener
condition (\re{W1}) fails, the dimension of the space
\be\la{V}
V:=\{v\in \R^{3\ov N}:~~ v(n)=\sum_{\theta\in\Pi^*\setminus\ga^*} e^{-i\theta n} \hat v(\theta),
\qquad \hat v(\theta)\in\C^3,~~ \Si(\theta)\hat v(\theta)=0\}
\ee
 is positive.
The above calculations  show that in this  case
\be\la{KYg}
\cK(S)=\{(0, \ov s+v,0):~~s\in\R^3,~~v\in V \}.
\ee
The subspace $V\subset \R^{3\ov N}$ is orthogonal to the $3D$ subspace 
$\{\ov s:s\in\R^3\}\subset \R^{3\ov N}$
by the Parseval theorem.
Hence,
$\dim \cK(S)=3+d$, 
where $d:=\dim V>0$. 
Thus, $\dim \cK(S)>3$.
Under the Wiener condition $V=0$, and (\re{KYg}) coincides with (\re{KY}).
}
\er

\subsection{The positivity  of  Hessian}
Denote by
$ N_S\cS$ the normal subspace to $\cS$ at a point $S$:
\be\la{L0N}
 N_S\cS:=\{Y\in\cW=H^1(\ov\T)\oplus \R^{3\ov N}\oplus \R^{3\ov N}: \langle Y,\tau\rangle=0,~~\tau\in  T_S\cS\},
\ee
where $ T_S\cS$ is the
tangent space to $\cS$ at the point $S$ and $\langle\cdot,\cdot\rangle$
stands for the scalar product (\re{dWW}).
Obviously, $\cS\subset\cM$ and  the tangent space to $\cM$ at a point
$S=(\psi_\al,\ov r, 0)$
is given by 
 \begin{equation}\la{TSM}
T_S\cM=\{(\vp, \vka,\pi)\in\cW:\vp\bot\psi_\al,~~ \vka\in\R^{3\ov N},
~~\pi\in\R^{3\ov N} \},
\end{equation}
since $DQ(\psi_\al,\ov r,0)=2(\psi_\al,0,0)$.

\bl\la{lW2}
Let
 the 
Jellium condition (\re{Wai}) hold, and $S=S_{\al,r}\in \cS$. Then the Wiener condition (\re{W1}) is necessary and sufficient
for the positivity of the Hessian 
$E''(S)$
in the orthogonal directions to $\cS$ on $\cM$,
i.e., 
\be\la{cM2}
E''(S)\Big|_{N_S\cS\cap  T_S\cM}>0.
\ee
\el
\begin{proof} i) Sufficiency.
Differentiating $S_{\al,r}=(e^{i\al}\psi_0,\ov r,0)\in \cS$ in the parameters $\al \in [0,2\pi]$ and $r\in\T$, 
we obtain 
\begin{equation}\la{tv}
T_S\cS=\{(iC\psi_\al ,\ov s,0): ~~C\in\R,~~s\in\R^3\}.
\end{equation}
	Hence,
	(\re{KY}) implies that 
	\be\la{tv2}
	\cK(S)\cap  N_S\cS=
	(0 ,0,0)
	\ee
Now  (\re{cM2}) follows since $E''(S)\ge 0$ by (\re{B2}).
	\medskip\\
 ii)	Necessity.
If the Wiener condition (\re{W1}) fails,
the space $\cK(S)$ is given by (\re{KYg}), and hence, 
(\re{tv}) implies that now
\be\la{tv3}
\cK(S)\cap  N_S\cS=
\{0 ,v,0): ~~C\in\R, ~~ v\in V\}\subset T_S\cM.
\ee
Therefore, the Hessian $E''(S)$ vanishes on the nontrivial space
$\cK(S)\cap  N_S\cS\subset   T_S\cM$ of the
dimension  $d>0$. 
Respectively, the positivity (\re{cM2}) breaks down.
\end{proof}

\br\la{rS} {\rm The positivity of type (\re{cM2}) breaks down for the submanifold 
	$\cS(r):=\{S_{\al ,r}:~\al\in[0,2\pi]\}$ 
	with a fixed $r\in\T$ instead of  the solitary manifold $\cS$.
	Indeed, then the corresponding tangent space  is smaller:
	\be\la{tvr}
	 T_S\cS(r)=\{(iC\psi_\al ,0,0):\,\,C\in\R\}.
	\ee
	Hence, the normal subspace $ N_S\cS(r)$ is larger, in particular containing all 
the vectors $(0,\ov s,0)$
	generating  the shifts of the torus. However, all these vectors also 
belong 
	to the null space (\re{KY}) and to $ T_S\cM$.
	Respectively, the null space of the Hessian $E''(S)$ in $ T_S\cM\cap N_S\cS(r)$ is  at least 3-dimensional.
}
\er

\subsection{The orbital stability}
Here we prove  Theorem \re{tm} which is our  main result.
For the proof is suffices to check
the lower energy estimate (\re{BLi}): 
\be\la{BL}
E(X)-\om_0Z\ge \nu\,d^2_\cV(X,\cS)\quad{\rm if}\quad d_\cV(X,\cS)\le \de,
\quad X\in\cM
\ee
with some $\nu,\de>0$. 
This estimate implies Theorem \re{tm} since the energy is conserved
along all trajectories.
First, we prove similar lower bound for the energy Hessian. 
\bl
Let conditions of Theorem \re{tm} hold. Then  for each $S\in \cS$
\be\la{L02}
\langle Y, E''(S)Y\rangle > \nu\Vert Y\Vert_\cW^2,  \qquad Y\in N_S\cS\cap   T_S\cM ,
\ee
 where $\nu>0$.
\el
\begin{proof}
It suffices to prove this estimate for $S=(\psi_0,0,0)$.
First, we note that $E''(S)$ is not complex linear due to the integral 
in (\re{Hfor}). Hence, we should express the action of $E''(S)$ 
in $\psi_1(x):=\rRe\psi(x)$ and $\psi_1(x):=\rIm\psi(x)$:
by the formula (1.15) of \ci{KKpl2015},
\begin{equation}\la{E''}
E''(S)Y=
\left(\begin{array}{cccl}
  -\De^\otimes+4e^2\psi_0 G\psi_0 & 0 & 2L & 0
 \medskip\\
 0 &   -\De^\otimes  &0 & 0\medskip\\
 2L^{\5*}  &    0  &   T    & 0  \\
      0      &    0            &   0    &  M^{-1} \\
\end{array}\right)Y\qquad{\rm for}\quad
Y=\left(\begin{array}{c}\psi_1 \\ \psi_2 \\ q \\ p \end{array}\right),
\end{equation}
where 
$\psi_0$
denotes the operators of  multiplication
by the real function $\psi_0(x)\equiv\sqrt{Z}$.
The operators $L$  correspond to
the matrix
\begin{equation}\la{S}
 L(x,n):=e\psi_0(x)G\na\si(x-n):~~~x\in\R^3,~n\in\Ga
\end{equation}
by formula (3.3) of \ci{KKpl2015}
and  $T$ corresponds to the real matrix with entries
\begin{equation}\la{T}
T(n-n'):=-\ds\langle  G\na\otimes\na\si(x-n'),  \si(x-n) \rangle,\qquad n,n'\in\Ga
\end{equation}
by formula (3.4) of \ci{KKpl2015} since the corresponding potential 
$\Phi_0=0$.
Hence,
$E''(S)$ is a finite-rank perturbation of the operator 
with the discrete spectrum
on the torus ${\T}$. 
Finally,  (\re{cM2}) implies that the minimal eigenvalue of
$E''(S)$ is  positive.
Therefore, (\re{L02}) follows.
\end{proof}
\medskip

The positivity (\re{L02}) implies
the lower energy estimate (\re{BL})  since the 
higher-order terms in (\re{te}) 
are negligible by the following  lemma.

\bl\la{lre2}
Let $\si(x)$ satisfy  (\re{ro+}).
Then the  remainder  (\re{B3}) admits the bound
\be\la{B31}
|R(S,Y)|\le C
\Vert Y\Vert_\cW^3
\quad\,\,\,{\rm for}\quad  \,\,\,
\Vert Y\Vert_\cW \le 1.
\ee
\el
\begin{proof} Due to (\re{B3}) 
	it suffices to prove the 
	estimates
	\be\la{B312}
	\Vert\sqrt{G}\rho^{(1)}\Vert_{L^2(\T)}\le C_1\Vert Y\Vert_\cW,\quad  
	\Vert\sqrt{G}\rho^{(2)}\Vert_{L^2(\T)}\le C_2\Vert Y\Vert_\cW^2
	\quad \,\,\,{\rm for}\,\,\,\quad     \Vert Y\Vert_\cW    \le 1.
	\ee
	i)
	By (\re{ro11}) we have
	for $Y=(\vp,\vka,p)$ 
	\be\la{B313}
	\sqrt{G}\rho^{(1)}=\sqrt{G}\si^{(1)}-2e\sqrt{G}
	\sum\limits_{j=1}^{\ov N}\rRe(\psi, \vp)_j(x).
	\ee
	in the notation (\re{wwd}).
	The operator $\sqrt{G}$ is bounded in $L^2(\R^3)$ by (\re{fs}). 
	Hence,  (\re{ro11}) implies that
	\be\la{GP1}
	\Vert\sqrt{G}\si^{(1)}\Vert_{L^2(\T)}\le C |\vka|.
	\ee
 Applying the Cauchy--Schwarz and Hausdorff--Young inequalities
to the second term on the RHS of  (\re{B313}),
	we obtain 
	\beqn\la{GP2}
	\Vert\sqrt{G}
	(\psi, \vp)_j
	\Vert_{L^2(\T)}&\le& 
	C\Big[\sum_{\xi\in\Xi\setminus 0}\fr{|\ti\vp(\xi)|^2}{|\xi|^2}\Big]^{1/2}
\le C\Vert\ti\vp\Vert_{L^4(\Xi)}\Big[\sum_{\xi\in\Xi\setminus 0}|\xi|^{-4}\Big]^{1/2}
\nonumber\\
&\le& C_1\Vert\vp\Vert_{L^{4/3}(\T)}\le C_2\Vert\vp\Vert_{H^1(\T)}^2
	\eeqn
by the Sobolev embedding theorem.
	Hence, the first inequality (\re{B312}) is proved.
	\medskip\\
	ii) Now we prove the second 
	inequality  (\re{B312}).  According to (\re{ro13}),
	\be\la{B317}
	\sqrt{G}\rho^{(2)}(x)=\sqrt{G}\si^{(2)}(x)-e\sqrt{G}
	\sum\limits_{j=1}^{\ov N}(\vp,\vp)_j(x).
	\ee
	Similarly to  (\re{GP1})
	\be\la{GP3}
	\Vert\sqrt{G}\si^{(2)}\Vert_{L^2(\T)}\le C|\vka|^2.
	\ee
	At last, 
	denoting $\beta(x):=(\vp,\vp)_j(x)$, we obtain similarly to (\re{GP2}) 
	\beqn\la{fp}
	\Vert\sqrt{G}
	(\vp,\vp)_k
	\Vert_{L^2(\T)}
	\le
	C\Big[\sum_{\xi\in\Xi\setminus 0}\fr{|\ti \beta(\xi)|^2}{|\xi|^2}\Big]^{1/2}
	\le
	C_1\Vert \beta\Vert_{L^{4/3}(\T)}.
	\eeqn
Finally,
applying the triangle inequality and the Sobolev embedding theorem, we obtain
\beqn\la{fp2}
\Vert \beta\Vert_{L^{4/3}(\T)}
&\le&
\int_{\T^{\ov N-1}} [\int_\T|\vp(\ov x)|^{8/3} dx_j]^{3/4}\,dx_1...
\widehat{dx_j}
...dx_{\ov N}
\nonumber\\
&\le&
\int_{\T^{\ov N-1}} [\int_\T|\na_{x_j}\vp(\ov x)|^2dx_j]\,dx_1...
\widehat{dx_j}
...dx_{\ov N}
\le
 C\Vert\vp\Vert_{H^1(\T)}^2.
\eeqn
Now the lemma is proved.
\end{proof}

\appendix

\setcounter{section}{0}
\setcounter{equation}{0}
\protect\renewcommand{\thesection}{\Alph{section}}
\protect\renewcommand{\theequation}{\thesection.\arabic{equation}}
\protect\renewcommand{\thesubsection}{\thesection.\arabic{subsection}}
\protect\renewcommand{\thetheorem}{\Alph{section}.\arabic{theorem}}

\setcounter{equation}{0}
\section{Global dynamics}

Here we prove the global well-posedness of the 
system (\ref{HS}).

\bt\label{TLWP1}
Let   (\re{ro+}) hold and $X(0)\in\cM$.
Then 
\medskip\\
i) 
There exists a unique
solution 
$X(t)\in C(\R,\cV)$ 
to (\re{HS}).
\medskip\\
ii) The energy and charge conservations (\re{EQ}) hold.

\et

First we construct the local solutions by contraction arguments.  
To construct the global solutions we prove
in Appendix B  energy conservation 
using the Galerkin approximations.

Let us prove the local well-posedness.

\bt\label{TLWP}(Local well-posedness).
Let   (\re{ro+}) hold and
$X(0)=(\psi_0,q_0,p_0)\in\cV=H^1(\ov\T)\otimes \ov\T\otimes \R^{3\ov N}$
with
 $| X(0)|_\cV:=\Vert\psi_0\Vert_{H^1(\ov\T)}+|p_0|\le R$. Then 
there exists $\tau=\tau(R)>0$ such that
  equation (\ref {HS}) has  a unique  solution $X\in C([-\tau,\tau],{\cal V})$,
 and the maps $U(t):X(0)\mapsto X(t)$ are continuous in $\cV$ for $t\in [-\tau,\tau]$.

\et
In the
 next two propositions we prove
the boundedness and the local Lipschitz continuity of the nonlinearity $N:\cV\to\cW=H^1(\ov\T)\oplus \R^{3\ov N}\oplus \R^{3\ov N}$ defined in (\re{HN}).
With  this proviso Theorem \re{TLWP} follows from the integral form 
(\re{LPSiv}) of the equation (\ref{HS})
 by the contraction mapping principle, since $e^{-At}$ is an isometry of $\cW$.
First, we prove the boundedness of $N$.
\bp\label{p1}
For any $R>0$ and $X=(\psi,q,p)\in\cV$
\begin{equation}\label{bN}
\Vert N(X)\Vert_\cW\le C(R)\qquad{\rm for}\quad |X|_\cV\le R.
\end{equation}
\ep
\begin{proof}
We need appropriate bounds for the charge density   $\rho$
and for the corresponding potential  $\Phi$.

\bl The charge density (\re{Hfor2}) admits the bounds 
\be\la{re4}
\Vert\rho\Vert_{L^3(\T)}+\Vert\na\rho\Vert_{L^{3/2}(\T)}\le C(1+\Vert\psi\Vert_\cF^2).
\ee

\el
\begin{proof}
 We split $\rho(x)$ as
  $\rho(x)=\rho^i(x)+\rho^e(x)$,
where
\[
\rho^i(x,t)=\sum_{n\in\Ga}\sigma(x-n-q(n,t)),
\]
while $\rho^e$ is defined by (\re{re}).
The bound
(\re{re4})
for $\rho^i$ holds by (\re{ro+}). It remains to prove the bound for $\rho^e$.
Definition (\re{re})   implies that  
\be\la{re2}
\rho^e(x)=-e\sum_{j=1}^{\ov N} \int_{\T^{\ov N-1}} |\psi(\ov x)|^2\Big|_{x_j=x}\,dx_1...\widehat{dx_j}...dx_{\ov N},
\quad x\in\T,
\ee
where the hat means that this differential is omitted. 
Differentiating, we obtain that
\be\la{re22}
\na\rho^e(x)=-e\sum_{j=1}^{\ov N} \int_{\T^{\ov N-1}} \na_{x_j}|\psi(\ov x)|^2\Big|_{x_j=x}\,dx_1...\widehat{dx_j}
...dx_{\ov N},\quad x\in\T.
\ee
Applying the triangle inequality to (\re{re2}), we get 
\beqn\la{re23}
\Vert\rho^e(x)\Vert_{L^3(\T)}&\le& C\sum_{j=1}^{\ov N} \int_{\T^{\ov N-1}} [\int_\T|\psi(\ov x)|^6dx_j]^{1/3}\,dx_1...
\widehat{dx_j}
...dx_{\ov N}
\nonumber\\
&\le&
\int_{\T^{\ov N-1}} [\int_\T|\na_{x_j}\psi(\ov x)|^2dx_j]\,dx_1...
\widehat{dx_j}
...dx_{\ov N}
\le
 C\Vert\psi\Vert_{H^1(\ov\T)}^2.
\eeqn
by the Sobolev embedding theorem \ci[Theorem 5.4, Part I]{Adams}.
Similarly, (\re{re22}) implies that
\beqn\la{re24}
\Vert\na\rho^e(x)\Vert_{L^{3/2}(\T)}&\le& C\sum_{j=1}^{\ov N}
\int_{\T^{\ov N-1}} [\int_\T|\psi(\ov x)\na_{x_j}\ov\psi(\ov x)|^{3/2}dx_j]^{2/3}\,dx_1...
\widehat{dx_j}
...dx_{\ov N}
 \nonumber\\
&\le&
\int_{\T^{\ov N-1}} [\int_\T|\psi(\ov x)|^6dx_j]^{1/6}[\int_\T|\na_{x_j}\psi(\ov x)|^2dx_j]^{1/2}\,dx_1...
\widehat{dx_j}
...dx_{\ov N}
 \nonumber\\
&\le&
\int_{\T^{\ov N-1}} [\int_\T|\na_{x_j}\psi(\ov x)|^2dx_j]\,dx_1...
\widehat{dx_j}
...dx_{\ov N}
\le
 C\Vert\psi\Vert_{H^1(\ov\T)}^2
\eeqn
by the H\"older inequality and  the Sobolev embedding theorem.
\end{proof}

\bl
 The potential $\Phi:=G\rho$ admits the bound
\be\la{Pb}
\Vert \Phi\Vert_{C(\T)}+\Vert \na\Phi\Vert_{L^3(\T)}\le C(1+\Vert\psi\Vert_\cF^2).
\ee
\el
\begin{proof}
 Applying   the H\"older and Hausdorff--Young inequalities to 
(\re{fs}), we obtain that
\be\la{s1}
\Vert \Phi\Vert_{C(\T)}\le C\Vert\fr{\ti\rho(\xi)}{\xi^2}\Vert_{L^1(\Xi\setminus 0)}
\le C_1\Vert \xi\tilde \rho\Vert_{L^3(\Xi)}
\Big[\sum_{\xi\in\Xi\setminus 0}|\xi|^{-9/2}\Big]^{2/3}
\le C_2\Vert\nabla \rho\Vert_{L^{3/2}(\T)}.
\ee
Similarly, 
\be\la{s2}
\Vert \na\Phi\Vert_{L^3(\T)}\le C\Vert\fr{\ti\rho(\xi)}{|\xi|}\Vert_{L^{3/2}(\Xi\setminus 0)}
\le C_1\Vert \xi\tilde \rho\Vert_{L^3(\Xi)}
\Big[\sum_{\xi\in\Xi\setminus 0}|\xi|^{-6}\Big]^{1/3}
\le C_2\Vert\nabla \rho\Vert_{L^{3/2}(\T)}.
\ee
Now the bound (\re{Pb}) follows from (\re{re4}).
\end{proof}

Now we can prove the estimate (\re{bN}).
First, we will prove
\begin{equation}\label{eq2}
\Vert\Phi^\otimes\5\psi\Vert_\cF\le C(1+\Vert\psi\Vert_\cF^3)
\end{equation}
in the notation (\re{2De}).
According to definition (\re{FN})
it suffices to  check that
\begin{equation}\la{c1}
\Vert\Phi^\otimes\5\psi \Vert_{L^2(\ov\T)}+
\Vert \Phi^\otimes\, 
\nabla^\otimes\5\psi\Vert_{L^2(\ov\T)}
+
\Vert \psi\nabla^\otimes\,\Phi^\otimes\Vert_{L^2(\ov\T)}\le C(1+\Vert\psi\Vert_\cF^3).
\end{equation}
The first two summands   admit the needed estimate by (\re{Pb}). The third summand
requires some additional argument. Namely,
\beqn\la{aa}
\Vert \psi 
\nabla^\otimes\,\Phi^\otimes\5\Vert_{L^2(\ov\T)}^2
&=& \int_{\ov\T}|\sum_{j=1}^{\ov N} \na\Phi(x_j)\psi(\ov x)|^2\,d\ov x
\le C\sum_1^{\ov N} \int_{\ov\T}|\na\Phi(x_j)\psi(\ov x)|^2\,d\ov x
\nonumber\\
 &=&C\sum_{j=1}^{\ov N} \int_{\T^{\ov N-1}}
 \Big[\int_{\T}|\na\Phi(x_j)\psi(\ov x)|^2\,d x_j\Big]dx_1...\widehat{dx_j}...dx_{\ov N}.
\eeqn
The inner integral is estimated as follows
\be\la{ii}
\int_\T|\na\Phi(x_j)\psi(\ov x)|^2\,d x_j\le \Vert \na\Phi\Vert_{L^3(\T)}^2\Big[\int _\T|\psi(\ov x)|^6\,d x_j\Big]^{1/3}
\le C\Vert \na\Phi\Vert_{L^3(\T)}^2\int_\T|\na_{x_j}\psi(\ov x)|^2\,d x_j
\ee
by the H\"older inequality and the Sobolev embedding theorem. Substituting this estimate into (\re{aa}), we obtain
\begin{equation}\label{c3}
\Vert \psi 
\nabla^\otimes\,\Phi^\otimes\Vert_{L^2(\ov\T)}\le C\Vert \na\Phi\Vert_{L^3(\T)}
\Vert\psi\Vert_\cF.
\end{equation}
This and (\re{Pb}) imply (\re{c1}) for the third summand. 
Finally, using 
(\re{HN}),
(\ref{Pb}) and (\ref{ro+}),
\begin{equation}\label{eq3}
|f(n)|\le \Vert \Phi\Vert_{C(\T)}\Vert\na\si\Vert_{L^1(\T)}
\le C(1+\Vert\psi\Vert_\cF^2),\qquad n\in\Ga.
\end{equation}
Hence, (\ref{eq2}) and (\ref{eq3}) imply (\ref{bN}). Proposition \re{p1} is proved.
\end{proof}

It remains to prove  that the nonlinearity is locally Lipschitz.

\begin{proposition}\label{p2}

For any $R>0$ and $X_1,X_2\in\cV=H^1(\ov\T)\otimes \ov\T\otimes \R^{3\ov N}$ with $| X_1|_{\cal V},| X_2|_\cV\le R$
\begin{equation}\label{lN}
\Vert N(X_1)-N(X_2)\Vert_\cW\le C(R) d_\cV(X_1,X_2).
\end{equation}
\end{proposition}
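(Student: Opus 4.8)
The plan is to reduce the Lipschitz estimate \eqref{lN} to three pieces matching the three components of $N(X)=(ie\Phi^\otimes\psi,\,p,\,f)$, and to control each by writing differences of products in telescoping form. Write $X_i=(\psi_i,q_i,p_i)$, let $\rho_i$ be the corresponding charge density from \eqref{Hfor2}, and $\Phi_i:=G\rho_i$. The second component is trivial: $\Vert p_1-p_2\Vert=|p_1-p_2|\le d_\cV(X_1,X_2)$. For the remaining two components everything will follow once I establish the key auxiliary Lipschitz bounds
\be
\Vert\rho_1-\rho_2\Vert_{L^3(\T)}+\Vert\na(\rho_1-\rho_2)\Vert_{L^{3/2}(\T)}\le C(R)\,d_\cV(X_1,X_2),
\ee
\be
\Vert\Phi_1-\Phi_2\Vert_{C(\T)}+\Vert\na(\Phi_1-\Phi_2)\Vert_{L^3(\T)}\le C(R)\,d_\cV(X_1,X_2),
\ee
which are the Lipschitz analogues of the a priori bounds \eqref{re4} and \eqref{Pb} proved inside Proposition \re{p1}, the second following from the first exactly as \eqref{s1}--\eqref{s2} followed from \eqref{re4}.

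For the first auxiliary bound I would split $\rho_i=\rho_i^e+\rho_i^i$ and treat the two summands separately, reusing the integral identities \eqref{re2}, \eqref{re22}. For the electronic part write $|\psi_1|^2-|\psi_2|^2=\rRe\big((\psi_1-\psi_2)\ov{(\psi_1+\psi_2)}\big)$ (and similarly after differentiating, distributing $\na_{x_j}$ by the Leibniz rule), then repeat verbatim the fibrewise H\"older / Sobolev-embedding estimates of \eqref{re23}--\eqref{re24}; the extra factor $\Vert\psi_1+\psi_2\Vert_{H^1(\ov\T)}\le 2R$ gets absorbed into $C(R)$, and one factor $\Vert\psi_1-\psi_2\Vert_{H^1(\ov\T)}$ survives, giving the stated bound. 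For the ionic part, $\rho_1^i-\rho_2^i=\sum_n[\si(\cdot-n-q_1(n))-\si(\cdot-n-q_2(n))]$; since $\si\in C^2(\T)$ by \eqref{ro+}, the finite sum is Lipschitz in $(q(n))$ in $L^3$ and, after one derivative, in $L^{3/2}$, with constant depending only on $\Vert\si\Vert_{C^2}$ and $\ov N$ — this uses the elementary $\Vert g(\cdot-a)-g(\cdot-b)\Vert\le |a-b|\,\Vert\na g\Vert$ type inequality.

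With the two auxiliary bounds in hand I would handle the first component of $N$ by the telescoping decomposition
\be
\Phi_1^\otimes\psi_1-\Phi_2^\otimes\psi_2=(\Phi_1^\otimes-\Phi_2^\otimes)\psi_1+\Phi_2^\otimes(\psi_1-\psi_2),
\ee
and estimate each term in the $\cF$-norm exactly as \eqref{eq2} was proved, i.e.\ via \eqref{c1}: the terms without a gradient on $\Phi$ use $\Vert\Phi_1-\Phi_2\Vert_{C(\T)}$ and $\Vert\Phi_2\Vert_{C(\T)}\le C(R)$, while the term $\Vert(\psi_1)\na^\otimes(\Phi_1^\otimes-\Phi_2^\otimes)\Vert_{L^2(\ov\T)}$ is controlled by $\Vert\na(\Phi_1-\Phi_2)\Vert_{L^3(\T)}\Vert\psi_1\Vert_\cF$ through the fibrewise argument \eqref{aa}--\eqref{c3}. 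For the third component, $f_i(n)=-(\na\Phi_i,\si(\cdot-n-q_i(n)))$, I add and subtract $(\na\Phi_1,\si(\cdot-n-q_2(n)))$: the first difference is bounded by $\Vert\na(\Phi_1-\Phi_2)\Vert_{L^3}\Vert\si\Vert_{L^{3/2}}$ (or simply $\Vert\Phi_1-\Phi_2\Vert_{C}\Vert\na\si\Vert_{L^1}$ after integrating by parts, as in \eqref{eq3}), and the second by $\Vert\na\Phi_1\Vert_{L^3}\Vert\si(\cdot-q_1(n))-\si(\cdot-q_2(n))\Vert_{L^{3/2}}\le C(R)|q_1(n)-q_2(n)|$ using $\si\in C^2$. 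Summing over the finitely many $n\in\Ga$ and combining all three components yields \eqref{lN}. The only mildly delicate point — and hence the main obstacle — is bookkeeping the Lipschitz dependence on the ion positions $q$ uniformly through the nonlinear substitution $q\mapsto\si(\cdot-n-q(n))$ after one spatial derivative, i.e.\ controlling $\na(\rho_1^i-\rho_2^i)$ in $L^{3/2}$; this is where the hypothesis $\si\in C^2(\T)$ (rather than merely $C^1$) is used.
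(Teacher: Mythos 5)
Your proposal is correct and follows essentially the same route as the paper: the same telescoping decomposition $\Phi_1^\otimes\psi_1-\Phi_2^\otimes\psi_2=(\Phi_1^\otimes-\Phi_2^\otimes)\psi_1+\Phi_2^\otimes(\psi_1-\psi_2)$, the same reduction to Lipschitz analogues of \eqref{re4} and \eqref{Pb} via \eqref{s1}--\eqref{s2}, and the same add-and-subtract treatment of $f(n)$ using $|\si(x)-\si(x-a)|\le C|a|$. You merely spell out a few steps the paper compresses into ``as in \eqref{re4}'' (the splitting $|\psi_1|^2-|\psi_2|^2=\rRe((\psi_1-\psi_2)\ov{(\psi_1+\psi_2)})$ and the role of $\si\in C^2$ for the ionic gradient term), which is consistent with the paper's argument.
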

\begin{proof}
Writing  $X_k=(\psi_k,q_k,p_k)$ and $\Phi_k=G\rho_k$, we obtain that
\begin{equation}\label{NN1}
\Vert\Phi^\otimes_1\5\psi_1-\Phi^\otimes_2\5\psi_2\Vert_\cF\le\Vert(\Phi^\otimes_1-\Phi^\otimes_2)\5\psi_1\Vert_\cF
+\Vert\Phi^\otimes_2 \5(\psi_1-\psi_2)\Vert_\cF.
\end{equation}
Using   (\ref{c3}) and (\ref{Pb}), we obtain 
\begin{eqnarray}\nonumber
\Vert \Phi^\otimes_2\5(\psi_1-\psi_2)\Vert_\cF&\le&
(\Vert \Phi_2\Vert_{C(\T)}+\Vert \nabla \Phi_2\Vert_{L^3(\T)})
\Vert\psi_1-\psi_2\Vert_\cF\\
\nonumber\\
&\le&C(1+R^2)\Vert\psi_1-\psi_2\Vert_\cF\le C(R)
d_\cV(X_1,X_2).\label{NN2}
\end{eqnarray}
Further,
 (\ref{s1}) and (\ref{s2}) give that
\be\la{P12}
\Vert\Phi_1-\Phi_2\Vert_{C(\T)}+\Vert\na\Phi_1-\na\Phi_2\Vert_{L^3(\T)}
\le\Vert\na\rho_1-\na\rho_2\Vert_{L^{3/2}(\T)}
\ee
However, 
 $|\si(x)-\si(x-a)|\le C|a|$,
where $|a|:=\min_{r\in a} |r|$ for $a\in \T$ (by definition, $a\subset\R^3$ is a class of equivalence mod $N\Z^3$).
Therefore,
as in (\ref{re4}),
\be\la{re43}
\Vert\na(\rho_1-\rho_2)\Vert_{L^{3/2}(\T)}\le CR
(| q_1-q_2|+\Vert\psi_1-\psi_2\Vert_\cF).
\ee
Hence,
\beqn
\Vert(\Phi^\otimes_1-\Phi^\otimes_2)\psi_1\Vert_\cF&\le&
(\Vert \Phi_1-\Phi_2\Vert_{C(\T)}+\Vert \nabla (\Phi_1-\Phi_2)\Vert_{L^3(\T)})\Vert\psi_1\Vert_\cF
\nonumber\\
\nonumber\\
&\le&
C R\Vert\na(\rho_1-\rho_2)\Vert_{L^{3/2}(\T)}\Vert\psi_1\Vert_\cF
\le C(R)d_\cV(X_1,X_2).  \label{ek}
\eeqn
Now (\re{NN1})--(\re{ek}) give
\be\la{Th}
\Vert\Phi^\otimes_1\5\psi_1-\Phi^\otimes_2\5\psi_2\Vert_{H^1(\T)}\le C(R)
d_\cV(X_1,X_2).
\ee
Similarly,  (\re{P12}), (\re{re43}) and (\ref{Pb})  imply 
\begin{eqnarray}\nonumber
&&\Vert\langle\na \Phi_1,\sigma(\cdot-n-q_1(n))\rangle-\langle\na\Phi_2,\sigma(\cdot-n-q_2(n))\rangle\Vert
\nonumber\\
\nonumber\\
&\le&\Vert\langle\na(\Phi_1-\Phi_2),\sigma(\cdot-n-q_1(n))\rangle\Vert
+\Vert\langle\nabla \Phi_2,\sigma(\cdot-n-q_1(n))-\sigma(\cdot-n-q_2(n))\rangle\Vert
\nonumber\\
\nonumber\\
&\le&C(\Vert\Phi_1-\Phi_2\Vert_{C(\T)}+\Vert \Phi_2\Vert_{C(\T)}) | q_1-q_2|)\le C(R)
d_\cV(X_1,X_2).
\end{eqnarray}
This estimate together with  (\re{Th}) prove (\re{lN}).
\end{proof}
Now 
Theorem \re{TLWP} follows from 
Propositions \ref{p1} and \ref{p2}. 
\medskip\\
{\bf Proof of Theorem \re{TLWP1}.}
The local solution  $X\in C([-\tau,\tau],\cV)$ to  (\ref {HS})
exists and is unique by Theorem \re{TLWP}.
On the other hand,
the conservation laws (\ref{EQ}) (proved in
Proposition \re{lgal} iii))
together with   (\re{EQV}) imply a priori bound
	\be\la{ab}
	| X(t)|_\cV^2\le C[E(X(0))+ Q(X(0))],\qquad t\in [-\tau,\tau]
	\ee
	by (\re{EQV}).
Hence, the local solution
admits an extension to the global one
$X\in C(\R,\cV)$. 
\hfill$\Box$

\br
The condition $X(0)\in\cM$ implies that $X(t)\in\cM$ for all $t\in\R$ by 
  the charge conservation 
(\re{EQ}). 
	Hence,  (\ref {HS}) implies (\ref{LPS1})--(\ref{LPS3}) with 
	the potential $\Phi(\cdot,t)=G\rho(\cdot,t)$.
\er

\setcounter{equation}{0}
\section{Conservation laws}

We deduce the conservation  laws (\ref{EQ}) by the Galerkin approximations \ci{Lions}. 
Let us recall that
the exterior product of functions 
$f_j\in L^2(\T)$
is defined by
\be\la{Lam}
[\Lam_{j=1}^{\ov N} f_j](\ov x):=\fr1{\sqrt{\ov N!}} \sum_{\pi\in S_{\ov N}} 
(-1)^{|\pi|} \prod\limits_{j=1}^{\ov N}f_j(x_{\pi(j)}),
\qquad \ov x=(x_1,...,x_{\ov N})\in\ov\T,
\ee
where $S_{\ov N}$ is the symmetric group and $|\pi|$ denotes the sign (or parity) of a 
transposition $\pi$.

\bd
i)$\cV_m$ with $m\in\N$  denotes the finite-dimensional  submanifold of 
the Hilbert manifold
$\cV=H^1(\ov\T)\otimes \ov\T\otimes \R^{3\ov N}$ 
\be\la{Vm}
\cV_m:=\{(\sum_{\ov k} C(\ov k)\Lam_{j=1}^{\ov N} e^{ik_jx_j},q,p):k_j\in\Xi,~~
C(\ov k)\in\C,~~
\sum\limits_{j=1}^{\ov N}
k_j^2\le m,~ 
q\in\ov\T,~ p\in\R^{3\ov N}\},
\ee
where $\ov k:=(k_1,...,k_{\ov N})$.
\medskip\\
ii) $\cW_m$ with $m\in\N$  denotes the finite-dimensional linear subspace of the Hilbert space
$\cW=H^1(\ov\T)\oplus \R^{3\ov N}\oplus \R^{3\ov N}$
\be\la{Wm}
\cW_m:=\{(\sum_{\ov k} C(\ov k) \Lam_{j=1}^{\ov N} e^{ik_jx_j},\vka,v):k_j\in\Xi,~~
C(\ov k)\in\C,~~
\sum\limits_{j=1}^{\ov N}k_j^2\le m,  ~ \vka\in\R^{3\ov N}, ~ v\in\R^{3\ov N}\}.
\ee
\ed
Obviously, $\cV_1\subset\cV_2\subset...$, 
the union $\cup_m\cV_m$ is dense in $\cV$, and 
$\cW_m$ are  invariant with respect 
to $H$ and $J$.
Let us denote by $P_m$
the orthogonal projector $\cX\to\cW_m$. This projector is also orthogonal in 
the Hilbert space $\cW$.
Let us 
approximate the system 
(\re{HS}) by finite-dimensional Hamilton systems  on the manifold $\cV_m$,
\be\la{gal}
\dot X_m(t)=JE_m'(X_m(t)),\qquad t\in\R,
\ee
where $E_m:=E|_{\cV_m}$ and $X_m(t)=(\psi_m(t),q_m(t),p_m(t))\in C(\R,\cV_m)$.
The equation (\re{gal}) can be also  written as
\begin{equation}\la{gali}
\langle \dot X_m(t),Y\rangle=-\langle E'(X_m(t)),JY\rangle,\qquad Y\in\cW_m.
\end{equation}
This form of the equation (\re{gal}) holds 
since $E_m:=E|_{\cV_m}$ and $\cW_m$ is  invariant with respect 
to  $J$.
Equivalently,
\begin{equation}\la{gali2}
\dot X_m(t)=H X_m(t) + P_m N(X_m(t)).
\end{equation}

The Hamiltonian form guarantees the energy and charge conservation
 (\ref{EQ}):
\begin{equation}\la{EQ2}
E(X_m(t))=E(X_m(0)),\quad Q(X_m(t))=Q(X_m(0)),\qquad t\in\R.
\end{equation}
Indeed, the energy conservation holds by the Hamiltonian form  (\re{gal}), 
while the charge conservation holds
by
the Noether theory \ci{A,GSS87, KQ} due to the
$U(1)$-invariance of $E_m$, see (\re{U1}).

The equation (\re{gali2}) admits a unique local solution for every initial state
 $X_m(0)\in\cV_m$ since the right hand side 
is locally bounded and Lipschitz continuous. 
The global solutions exist by   (\re{EQV})
and
the energy and charge conservation (\re{EQ2}).
\medskip

Finally, we take any $X(0)\in\cV$ and choose a sequence 
\be\la{s0}
X_m(0)\to X(0),\qquad m\to\infty,
\ee
where the convergence holds in the metric of $\cV$.
Therefore, 
\be\la{EQm}
E(X_m(0))\to E(X(0)),\qquad Q(X_m(0))\to Q(X(0)).
\ee
Hence, (\ref{EQ2}) and (\re{EQV}) imply the basic uniform bound 
\be\la{Vb}
R:=\sup_{m\in\N}\,\,\sup_{t\in\R}| X_m(t)|_\cV <\infty.
\ee
Therefore, 
(\ref{gali2})  and  Proposition \re{p1} imply the second basic uniform bound
\be\la{Vb2}
\sup_{m\in\N}\,\sup_{t\in\R}\,\Vert \dot X_m(t)\Vert_{\cW^{-1}} <C(R),
\ee
since  the operator 
$H:\cW\to\cW^{-1}$ is bounded, and the projector
$P_m$ is also a bounded operator in $\cW\subset \cW^{-1}$.
Hence, 
the Galerkin
approximations $X_m(t)$ are uniformly Lipschitz-continuous with values in $\cV^{-1}$:
\be\la{ecg}
\sup_{m\in\N}\, d_{\cV^{-1}}(X_m(t),X_m(s))\le C(R)|t-s|,\qquad s,t\in\R.
\ee
Let us show that  
the uniform estimates   (\ref{Vb}) and (\ref{ecg}) imply a compactness of the 
Galerkin approximations and the conservation laws. 
Let us recall that $\cX:=\cV^0$ and $\cV:=\cV^1$.
\bp\la{lgal} Let   (\re{ro+}) hold and $X(0)\in\cV$. Then 
\medskip\\
i) 
There exists
a subsequence $m'\to\infty$ such that
\be\la{ss}
X_{m'}(t)\tocX X(t),\qquad m'\to\infty,\qquad t\in\R,
\ee
where $X(\cdot)\in C(\R, \cX)$.
\medskip\\
ii) Every limit function $X(\cdot)$
is a solution to  (\re{HS}), and $X(\cdot)\in C(\R,\cV)$.
\medskip\\
iii) The conservation laws (\re{EQ}) hold.
\ep
\begin{proof}
	i) The convergence (\re{ss}) follows from 
	(\re{Vb}) and  (\re{Vb2}) 
	by the Dubinsky
	`theorem on three spaces' \ci{Dub65}  (Theorem 5.1 of
	\ci{Lions}). Namely, the embedding $\cV\subset\cX$ is compact by the Sobolev theorem,
	and hence, (\re{ss}) holds by 
	(\re{Vb})
	for $t\in D$, where $D$ is a countable dense set. 
	Finally, let us use the  interpolation inequality  and  (\re{Vb}), (\re{ecg}):
	for any $\ve>0$
	\be\la{inti}
	d_\cX(X_m(t),X_m(s))\le\ve d_\cV(X_m(t),X_m(s))
	+ C(\ve)  d_{\cV^{-1}}(X_m(t),X_m(s))\le 2\ve R+C(\ve,R)|t-s|.
	\ee
	This inequality 
	implies the 
	equicontinuity of the Galerkin approximations with values in $\cX$. Hence,
	convergence 
	(\re{ss}) holds for all $t\in\R$ since it holds for 
	the dense set of $t\in D$. 
	The same equicontinuity also implies the continuity of the limit function $X\in C(\R, \cX)$.
	\medskip\\
	ii) 
	Integrating equation (\re{gali2}), we obtain 
	\begin{equation}\la{gal2}
	\int_0^t\langle \dot X_m(t),Y\rangle\,ds=\int_0^t\langle X_m(s),HY)\,ds + \int_0^t\langle N(X_m(s)),Y\rangle\,ds,
	\qquad Y\in\cW_m,
	\end{equation}
	Below we will write  $m$ instead of $m'$. 
	To prove  (\re{LPSiv}) it suffices to
	check that 
	in the limit $m\to\infty$, we get
	\begin{equation}\la{gal4}
	\int_0^t\langle \dot X(t),Y\rangle\,ds
	=\int_0^t(X(s),HY)\,ds + \int_0^t\langle N(X(s)),Y\rangle\,ds,
	\qquad Y\in\cW_n,\qquad n\in\N.
	\end{equation}
	The convergence of  the left hand side and of the first term on the right hand side 
	of (\re{gal2})
	follow
	from (\re{ss}) and  (\re{s0}) since $HY\in\cW_m$.

	It remains to consider the last integral in (\re{gal2}).
	The integrand is uniformly bounded by (\re{Vb}) and Proposition \re{p1}. 
	Hence, it suffices to check the pointwise
	convergence
	\be\la{Nm}
	\langle N(X_m(s), Y\rangle-\!\!\!-\!\!\!\!\to \langle N(X(s), Y\rangle,\qquad Y\in\cW_n
	\ee
	for any $s\in\R$. Here 
$N(X_m(s))=(ie\Phi^\otimes_m(s)\psi_m(s),p_m(s),f_m(s))$ according to 
	the notation (\re{HN}), and
	$Y=(\vp,\vka,v)\in \cW_n$. Hence, 
	(\re{Nm})
	reads
	\be\la{Nm2}
	ie[\Phi^\otimes_m(s)\psi_m(s),\vp]+p_m(s)\vka+f_m(s)v\,\to\,  
	ie(\Phi^\otimes(s)\5\psi(s),\vp)+p(s)\vka+f(s)v,
	\ee
	where $[\cdot,\cdot]$ is the scalar product in $L^2(\ov\T)$.
	The convergence of $p_m(s)\vka$ follows from (\re{ss}) (with $m'=m$) .
	To prove the convergence of the two remaining terms we first show that
	\be\la{2rt}
	\Phi_m(s):=G\rho_m \toLC \Phi(s):=G\rho.
	\ee
	Indeed, 
	 (\re{ss}) implies that 
	 \be\la{qp}
	 \psi_m(s)\toLtt\psi(s),\qquad q_m(s)\to q(s)
	\ee
	Further, the sequence $\psi_m(s)$ is bounded in $H^1(\ov\T)$ by (\re{Vb}). 
	Hence, the sequence $\rho_m(s)$ is bounded in 
	the Sobolev space
	$W^{1,3/2}(\T)$ by (\re{re4}).
	Therefore, the sequence $\rho_m(s)$ is precompact in $L^2(\T)$ by  the Sobolev compactness theorem. 
	Hence,
	\be\la{qp2}
	\rho_m\toLt\rho
	\ee
	by (\re{qp}).
	Therefore,
	(\re{2rt}) holds
	since the operator $G:L^2(\T)\to C(\T)$ is continuous. 
	Finally, (\re{2rt}) and (\re{qp}) imply that
	\be\la{Nm3}
	\Phi^\otimes_m(s)\psi_m(s)\toLtt\5 \Phi^\otimes(s)\psi(s),\qquad
	f_m(s)\to f(s),
	\ee
	which proves (\re{Nm2}). Now (\re{gal4}) is proved for $Y\in\cV_n$ with any $n\in\N$.
	Hence, $X(t)$ is a solution to (\re {HS}). Finally,  
	$N(X(t))$ is  bounded in $\cW$ by
  (\re{Vb}) and Proposition \re{p1}.
  Hence, 
 (\re{LPSiv}) implies that  $X(\cdot)\in C(\R,\cV)$.
	\medskip\\
	iii) The conservation laws (\re{EQ2}) and the convergences (\re{s0}), (\re{ss}) imply that
	\be\la{EQ3}
	E(X(t))\le E(X(0)),\quad Q(X(t))\le Q(X(0)),\qquad t\in\R.
	\ee
	The last inequality holds by the first convergence of (\re{qp}).
The first inequality follows from the representation
\begin{equation}\la{EQ4}
E(X_m(t))=\fr12 \Vert\na\psi_m(t)\Vert_{L^2({\T})}^2+\fr12 
\Vert \sqrt{G}\rho_m(t)\Vert_{L^2({\T})}^2 +
\sum_{n\in\Gamma_n}\fr{p_m^2(n,t)}{2M}.
\end{equation}
Namely, the last two terms on the right hand side converge 
by (\re{qp2}) and (\re{ss}). Moreover, 
the first term is bounded by (\re{Vb}). Hence,  the first convergence of
 (\re{qp}) implies the weak convergence 
 \begin{equation}\la{EQ5}
\na\psi_{m}(t)\toLwt\na\psi(t)
 \end{equation}
 by the Banach theorem.
Now the first inequality of (\re{EQ3}) follows by the property of the weak convergence
in the Hilbert space.
Finally, the opposite inequalities 
 to (\re{EQ3})
 are also true by the uniqueness of 
solutions $X(\cdot)\in C(\R,\cV)$, which is proved in Proposition \re{TLWP}.
\end{proof}


\end{document}